\documentclass[11pt]{amsart}

\usepackage{amssymb}
\usepackage{amsmath}
\usepackage{amsthm}
\usepackage{enumerate}
\usepackage{graphicx}
\usepackage{blkarray}
\usepackage{mathrsfs}

\usepackage{caption}
\usepackage{subcaption}

\usepackage[hidelinks]{hyperref}


\theoremstyle{plain}
\newtheorem{thm}{Theorem}[section]

\newtheorem{lem}[thm]{Lemma}

\numberwithin{equation}{section}

\theoremstyle{definition}

\theoremstyle{remark}
\newtheorem{remark}[thm]{Remark}

\newtheorem*{acknowledgements}{Acknowledgements}

\theoremstyle{plain}


\newcommand{\thmref}[1]{Theorem~\ref{#1}}

\newcommand{\lemref}[1]{Lemma~\ref{#1}}


\newcommand{\calT}{{\mathcal T}}

\newcommand{\calV}{{\mathcal V}}
\newcommand{\calW}{{\mathcal W}}

\newcommand{\HH}{{\mathbb H}}
\newcommand{\RR}{{\mathbb R}}

\newcommand{\ZZ}{{\mathbb Z}}


\newcommand{\eps}{{\varepsilon}}



\DeclareMathOperator{\SL}{SL}

\DeclareMathOperator{\Out}{Out}
\DeclareMathOperator{\Mod}{Mod}
\DeclareMathOperator{\PMod}{PMod}

\DeclareMathOperator{\Fix}{Fix}
\DeclareMathOperator{\Aut}{Aut}
\DeclareMathOperator{\Isom}{Isom}

\newcommand{\id}{\mathrm{id}}


\begin{document}

\title[Failure of the well-rounded retract]{Failure of the well-rounded retract for Outer space and Teichm\"uller space}
\author{Maxime Fortier Bourque}
\address{D\'epartement de math\'ematiques et de statistique, Universit\'e de Montr\'eal, 2920, chemin de la Tour, Montr\'eal (QC), H3T 1J4, Canada}
\email{maxime.fortier.bourque@umontreal.ca}

\begin{abstract}
The well-rounded retract for $\SL_n(\ZZ)$ is defined as the set of flat tori of unit volume and dimension $n$ whose systoles generate a finite-index subgroup in homology. This set forms an equivariant spine of minimal dimension for the space of flat tori.

For both the Outer space $X_n$ of metric graphs of rank $n$ and the Teichm\"uller space $\calT_g$ of closed hyperbolic surfaces of genus $g$, we show that the literal analogue of the well-rounded retract does not contain an equivariant spine. We also prove that the sets of graphs whose systoles fill either topologically or geometrically (two analogues of a set proposed as a spine for $\calT_g$ by Thurston) are spines for $X_n$ but that their dimension is larger than the virtual cohomological dimension of $\Out(F_n)$ in general.
\end{abstract}

\maketitle

\section{Introduction}

A \emph{systole} in a compact metric space is a non-contractible closed curve of  minimal length among such curves. 
Ash \cite{Ash} defined the \emph{well-rounded retract} $\mathfrak{W}_n$ of the space $\mathfrak{T}_n$ of marked flat tori of unit volume and dimension $n$ as the set of tori $T$ whose systoles generate a finite-index subgroup in $H_1(T,\ZZ)$, following work of Soulé \cite{Soule} in the case $n=3$. He proved that there is an $\SL_n(\ZZ)$-equivariant deformation retraction of $\mathfrak{T}_n$ onto $\mathfrak{W}_n$, i.e., that $\mathfrak{W}_n$ is an equivariant \emph{spine} for $\mathfrak{T}_n$. Furthermore, the quotient $\mathfrak{W}_n / \SL_n(\ZZ)$ is compact and the dimension of $\mathfrak{W}_n$ is equal to the virtual cohomological dimension (vcd) of $\SL_n(\ZZ)$, the smallest possible for a spine.

Motivated by this, Thurston \cite{Thurston} considered the set $\calV_g$ of marked closed hyperbolic surfaces of genus $g$ whose systoles \emph{fill}, meaning that each component of the complement of their union is contractible (hence the interior of a polygon). Equivalently, a set $C$ of curves on a surface $S$ fills if every non-contractible closed curve in $S$ intersects some element of $C$. Thurston sketched a proof that there is a mapping class group equivariant deformation retract of the Teichm\"uller space $\calT_g$ onto $\calV_g$, but his argument had gaps \cite{Ji}. Furthermore, the dimension of $\calV_g$ is larger than the vcd of the mapping class group $\Mod_g$ in general \cite{dim}.

The third character in this story is the Culler--Vogtmann Outer space $X_n$ of marked metric graphs of unit volume and rank equal to $n$, upon which the group $\Out(F_n)$ of outer automorphisms of the free group of rank $n$ acts. Culler and Vogtmann \cite{CullerVogtmann} found a cocompact equivariant spine $K_n$ for $X_n$ of dimension $2n-3$, equal to the vcd of $\Out(F_n)$ (which was determined using $K_n$). This spine $K_n$ is not defined in terms of systoles.

If $(E,G)$ is equal to either $(\mathfrak{T}_n,\SL_n(\ZZ))$, $(\calT_g,\Mod_g)$, or $(X_n,\Out(F_n))$, and $x \in E$, then
\[
\dim(E) = \mathrm{vcd}(G) + \mathrm{rank}(H_1(x,\ZZ))-1,
\]
which suggests that one should use homology to define spines of minimal dimension. A naive approach is to simply transpose the definition of the well-rounded retract $\mathfrak{W}_n$ in the other two settings. That is, we can define the set $W_n \subset X_n$ of graphs whose systoles generate a finite-index subgroup in integral homology and the set $\calW_g \subset \calT_g$ of hyperbolic surfaces whose systoles generate a finite-index subgroup in integral homology. In his PhD thesis \cite{Baker}, Baker proved that $W_3$ is a spine of minimal dimension for $X_3$ different from $K_3$. However, these analogues $W_n$ and $\calW_g$ of the well-rounded retract fail to achieve their goal in general.

\begin{thm} \label{thm:not_spine_outer}
There exist infinitely many $n\geq 2$ such that $W_n$ does not contain any $\mathrm{Out}(F_n)$-equivariant spine for $X_n$. 
\end{thm}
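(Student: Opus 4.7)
The plan is to reduce the theorem to the existence, for infinitely many $n$, of a finite subgroup $H_n \leq \Out(F_n)$ whose fixed point set in $X_n$ is nonempty and entirely disjoint from $W_n$. This reduction rests on the non-emptiness (in fact, contractibility) of fixed sets of finite subgroups of $\Out(F_n)$ acting on Outer space, due to Krsti\'c--Vogtmann. Indeed, if $S \subseteq W_n$ were an $\Out(F_n)$-equivariant spine for $X_n$, then restricting an equivariant deformation retraction $X_n \to S$ to $H_n$-fixed points would yield a deformation retraction of $\Fix(H_n, X_n)$ onto $\Fix(H_n, S) \subseteq \Fix(H_n, W_n)$, forcing $\Fix(H_n, W_n)$ to be nonempty, a contradiction.

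To produce such $H_n$, I would take the ``bigon necklace'' $\Gamma_n$: for $n \geq 4$, arrange $n-1$ vertices in a cycle and connect each consecutive pair by two parallel edges. The resulting multigraph has rank $n$, with $2(n-1)$ edges, and its automorphism group $H_n \cong (\ZZ/2)^{n-1} \rtimes D_{n-1}$ acts transitively on edges (combining the bigon-internal edge swaps with the dihedral motions of the underlying cycle). Edge-transitivity forces every $H_n$-symmetric length assignment on $\Gamma_n$ to give all edges the common length $1/(2(n-1))$. The systoles of this unique symmetric metric are the $n-1$ bigons, of length $1/(n-1)$; they span only a rank-$(n-1)$ sublattice of $H_1(\Gamma_n, \ZZ) \cong \ZZ^n$. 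The missing homology class is the wrap-around cycle, which has length $1/2$ (strictly larger than each bigon). Hence $\Gamma_n \notin W_n$.

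The main technical difficulty, and the step I expect to be the hardest, is verifying that \emph{every} point of $\Fix(H_n, X_n)$ lies outside $W_n$, not just $\Gamma_n$ itself. For this I would invoke the identification of $\Fix(H_n, X_n)$ with a deformation space of rank-$n$ graphs carrying a compatible $H_n$-action (Krsti\'c--Vogtmann, Guirardel--Levitt). Edge-transitivity of $H_n$ on $\Gamma_n$ already rules out any $H_n$-equivariant edge collapse within the open simplex of $\Gamma_n$, and $H_n$-equivariant blow-ups at vertices are severely constrained by the multi-edge and cyclic structure. More broadly, I would argue by case analysis on the combinatorial types of $H_n$-invariant marked graphs that some nonzero wrap-around homology class always survives and is realized only by long curves, so that the systoles span at most a proper sublattice of $H_1$ in every $H_n$-symmetric configuration.

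Once this classification is complete, the obstruction of the first paragraph applies to every $n \geq 4$, giving the desired conclusion for infinitely many $n$.
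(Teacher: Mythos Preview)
Your reduction---find a finite $H\leq\Out(F_n)$ whose fixed set in $X_n$ is nonempty yet disjoint from $W_n$, then use that an equivariant retraction restricts to fixed loci---is exactly the paper's (its Lemma~2.2). The difference is only in the choice of witness $H$.

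The paper takes $H=\Aut(M)$ for a \emph{trivalent} flag-transitive map $M$ of type $\{p,3\}$, $p\geq 7$, whose only length-$p$ cycles are the face boundaries (Lemma~2.1, proved via residual finiteness of the symmetry group of the $\{p,3\}$ tiling together with Gauss--Bonnet). Because $M$ is trivalent and $\Aut(M)$ is transitive on half-edges, there are no nontrivial $H$-equivariant blow-ups or collapses, so $\Fix(H)=\{M\}$ is a single point; the case analysis you flag as the main difficulty simply does not arise. One then checks $M\notin W_n$ directly: there are only $\tfrac{6}{p}(n-1)<n$ systoles.

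Your bigon necklace has degree-$4$ vertices, so the analysis is genuinely needed, but it is short. The stabilizer of a vertex $v_i$ in $H_n$ acts on the four incident half-edges through the dihedral group of order $8$, and the only invariant $2{+}2$ partition is ``the two half-edges lying in the same bigon''. Hence the unique $H_n$-equivariant blow-up of $\Gamma_n$ inserts one edge at each vertex between its two adjacent bigons, yielding a trivalent graph $\Gamma_n'$ with exactly two $H_n$-edge-orbits; being trivalent, $\Gamma_n'$ admits no further blow-up, and its only invariant forest is the set of new edges. Thus $\Fix(H_n)$ is a half-open interval, and for $n\geq 4$ every metric on it has the bigons strictly shorter than any wrap-around cycle, so the systoles always span rank $n-1$. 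Your plan goes through.

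Trade-offs: your construction is elementary, avoids Lemma~2.1 entirely, and gives the conclusion for every $n\geq 4$ rather than a sparse infinite set of ranks. The paper's maps, however, have only $\tfrac{6}{p}(n-1)$ systoles for $p$ as large as one likes, which yields the stronger remark that for any $\rho>0$ there is some $n$ for which even the set of graphs with at least $\rho n$ systoles contains no equivariant spine; your necklace, with exactly $n-1$ systoles, cannot reach this. The paper also reuses the same maps to prove Theorem~1.4.
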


\begin{thm} \label{thm:not_spine_teich}
There exist infinitely many $g\geq 2$ such that $\calW_g$ does not contain any $\Mod_g$-equivariant spine for $\calT_g$.
\end{thm}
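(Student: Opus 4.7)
\bigskip
\noindent\textbf{Proof plan.} The plan is the standard fixed-point set obstruction to equivariant retraction. Suppose $\calS \subseteq \calW_g$ were a $\Mod_g$-equivariant spine of $\calT_g$, realized by an equivariant deformation retraction $r_t \from \calT_g \to \calT_g$. For any finite subgroup $H \leq \Mod_g$, equivariance forces $r_t$ to preserve the fixed locus $\Fix_{\calT_g}(H)$, so
\[
r_1(\Fix_{\calT_g}(H)) \subseteq \Fix_{\calT_g}(H) \cap \calS \subseteq \Fix_{\calT_g}(H) \cap \calW_g.
\]
In particular, whenever $\Fix_{\calT_g}(H) \neq \emptyset$, we must have $\Fix_{\calT_g}(H) \cap \calW_g \neq \emptyset$. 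It therefore suffices to exhibit, for infinitely many $g$, a finite subgroup $H \leq \Mod_g$ with non-empty fixed locus \emph{every} point of which has systoles whose $\QQ$-homological span is proper in $H_1(S, \QQ)$.

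To keep the fixed locus under control, I would take $H$ to be a finite subgroup such that the quotient orbifold $S/H$ is rigid, i.e.\ has discrete Teichm\"uller space; then $\Fix_{\calT_g}(H)$ is itself discrete in $\calT_g$. A clean infinite source is provided by regular finite covers of a hyperbolic triangle orbifold such as the $(2,3,7)$-orbifold, giving the Hurwitz family: this produces infinitely many genera $g$, each equipped with a canonical (up to finitely many choices) $H$-symmetric hyperbolic surface $S$ on which the systoles are concretely analyzable.

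On such an $S$, the set of systoles is a union of $H$-orbits, so its $\QQ$-homological span $V \subseteq H_1(S, \QQ)$ is automatically an $H$-subrepresentation. The goal is to argue $V \neq H_1(S, \QQ)$. I would decompose $H_1(S, \QQ)$ into $H$-isotypic components and show that the systoles all lie inside a single such component (or a small collection of them), whose total dimension is less than $2g$. A favorable scenario is that the $H$-stabilizer of a systolic geodesic is large enough that the single $H$-orbit cannot possibly fill all $2g$ dimensions; then the conclusion is purely representation-theoretic and independent of which curve happens to be the systole.

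The main obstacle is the identification of the systoles and of their homology classes on these symmetric surfaces. For any individual surface (e.g.\ the Klein quartic) this is a classical but delicate computation, and the challenge is to do it uniformly across an infinite family. The cleanest resolution would be to choose $H$ so large that \emph{no} $H$-orbit of simple closed curves can have full $\QQ$-span in $H_1(S, \QQ)$, so that the obstruction becomes topological and representation-theoretic rather than requiring a geometric determination of which curves are systolic; a secondary concern is to verify the existence of such an infinite family of triangle-group covers of growing genus, which should follow from standard residual finiteness of the $(2,3,7)$ Fuchsian group.
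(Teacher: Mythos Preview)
Your fixed-point obstruction (the first paragraph) is correct and is exactly the paper's \lemref{lem:fix}. The gap is in the implementation.

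Your ``cleanest resolution''---hoping that for a Hurwitz surface $S$ with $H=\Aut(S)$, \emph{no} $H$-orbit of simple closed curves can span $H_1(S,\QQ)$---is false already for the Klein quartic. There $g=3$, $H=\PSL_2(\mathbb{F}_7)$, and $H_1(S,\QQ)$ is an \emph{irreducible} $6$-dimensional $H$-module (over $\CC$ it splits as the two conjugate $3$-dimensional irreducibles, which are Galois-conjugate over $\QQ$). Hence any $H$-invariant subspace of $H_1(S,\QQ)$ is either $0$ or everything; since the systoles are non-separating, their span is all of $H_1(S,\QQ)$ and the Klein quartic lies in $\calW_3$. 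So representation theory alone, without identifying the systoles, cannot carry the argument, and you are thrown back on the ``main obstacle'' you yourself flag: determining the systoles on an infinite family of triangle-group surfaces and checking their homological span. That is not addressed in the proposal.

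The paper sidesteps this entirely by replacing the representation-theoretic obstruction with a \emph{counting} obstruction. It uses the surfaces of \cite{sublinear}, built from flag-transitive maps of type $\{p,3\}$ with $p$ large, which have fewer than $\eps g$ systoles; since $\eps g<2g=\mathrm{rank}\,H_1$, these systoles cannot generate a finite-index subgroup regardless of what their homology classes are. The price is that the quotient $X/\Isom(X)$ is a quadrilateral orbifold rather than a triangle, so $\Fix(H)$ is a geodesic $L$ rather than a point; one must then check that the systole count stays below $\eps g$ along all of $L$ (done by analyzing the red/blue curves as in \cite{dim}), and finally pass from $\Mod_g^{\pm}$- to $\Mod_g$-equivariance via a separate argument: the fixed set of the orientation-preserving half $H^+$ is a Teichm\"uller disk $D\supset L$, and a $\Mod_g$-equivariant spine inside $\calW_g$ would meet $D$ in a connected set avoiding $L$, hence contained in a half-plane, contradicting that the stabilizer of $D$ acts as a lattice with full limit set.
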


Note that the dimension of $W_n$ (resp. $\calW_g$) is equal to the vcd of $\Out(F_n)$ (resp. $\Mod_g$). The obstruction comes instead from the fact that these sets miss certain loci of fixed points of finite subgroups that have to intersect any spine.

These theorems go in the same direction as results of Pettet and Souto showing that $\mathfrak{W}_n$ is a minimal spine \cite{PettetSouto2} and slightly modifying its de\-fi\-ni\-tion can yield sets of the same dimension that are not spines anymore \cite{PettetSouto1}. In other words, spines are sensitive and thus tricky to find.

There is also an analogue of the Thurston set $\calV_g$ in $X_n$. Indeed, consider the set $V_n \subset X_n$ of graphs whose systoles \emph{topologically fill}, meaning that each component of the complement of their union is contractible. Equivalently, a set $C$ of closed geodesics in a metric graph $\Gamma$ topologically fills if every non-contractible curve in $\Gamma$ intersects some element of $C$. One could also consider the set $V_n'$ of graphs whose systoles \emph{geometrically fill} in the sense that their union is equal to the whole graph. It is easy to see that
\[
W_n \subseteq V_n \quad \text{and} \quad   V_n' \subseteq V_n.
\]
Furthermore, $V_2'$ coincides with $K_2$, the dual to the Farey triangulation, but $W_2=V_2$ is strictly larger (it contains the dumbbells with two loops of equal length, which form spikes emanating from the midpoints of the edges in $K_2$).

In contrast with $W_n$, the sets $V_n$ and $V_n'$ are always spines.

\begin{thm}\label{thm:spine}
For every $n \geq 2$, the set $V_n$ is an $\Out(F_n)$-equivariant spine for $X_n$ and $V_n'$ is an equivariant spine for $V_n$. 
\end{thm}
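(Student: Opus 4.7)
My strategy is to construct a single equivariant geometric flow that retracts $X_n$ onto $V_n$ and simultaneously $V_n$ onto $V_n'$. Given $\Gamma \in X_n$, let $\Sigma(\Gamma) \subseteq \Gamma$ denote the union of all systoles of $\Gamma$, viewed as a subgraph. The flow $\Phi_t$ shrinks every edge of $\Gamma$ not contained in $\Sigma(\Gamma)$ at a uniform exponential rate, while rescaling globally so the total volume remains $1$; edges in $\Sigma(\Gamma)$ are frozen. The flow is $\Out(F_n)$-equivariant because it is defined purely in terms of the metric.

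I would analyze $\Phi_t$ in three cases. If $\Gamma \in V_n'$, then $\Sigma(\Gamma) = \Gamma$, no edge shrinks, and the flow fixes $\Gamma$. If $\Gamma \in V_n \setminus V_n'$, the edges outside $\Sigma(\Gamma)$ form a non-empty forest by topological filling; each such edge $e$ lies on some non-contractible loop $\alpha$ in $\Gamma$, and as $e$ shrinks the length of $\alpha$ (and of the shortest such $\alpha$ through $e$) decreases, until it reaches the systolic length. At that instant $\alpha$ becomes a systole, $e$ joins $\Sigma$, and the number of free edges strictly drops. Iterating, the flow reaches $V_n'$ in finite time. If $\Gamma \in X_n \setminus V_n$, the complement of $\Sigma(\Gamma)$ has a component $C$ of positive rank, so $C$ contains loops; the shortest loop in $C$ shrinks along with the edges of $C$ until it hits the systolic length and is absorbed. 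Iterating, we reach $V_n$ in finite time; stopping the flow at the first hitting time of $V_n$ gives the retraction onto $V_n$, and continuing gives the retraction onto $V_n'$.

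\textbf{Continuity and convergence.} The main subtlety is that $\Sigma(\Gamma_t)$ is only upper semicontinuous and jumps at absorption events, so the flow is only piecewise smooth. I would argue that $t \mapsto \Phi_t(\Gamma)$ is nevertheless $C^0$: at an absorption moment new edges get frozen but no length reverses direction, and the global rescaling factor depends continuously on the current set of frozen edges. A cleaner alternative is to smooth the definition by using $\Sigma_\eps(\Gamma)$, the union of edges lying on loops of length at most $(1+\eps)\sys(\Gamma)$, obtaining a genuinely smooth flow for each $\eps > 0$ and passing to a limit as $\eps \to 0$. Finite-time termination follows from two facts: $\Sigma(\Phi_t(\Gamma))$ can only grow along the flow (previously systolic loops remain systolic after uniform rescaling), and there are only finitely many subgraphs of a given combinatorial type.

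\textbf{Main obstacle.} The hardest part is the rigorous continuity check at absorption events, together with verifying that the flow stays inside $X_n$, i.e., never collapses a non-separating edge. The latter should come for free: if a shrinking edge $e \notin \Sigma(\Gamma_t)$ is non-separating, then it lies on some cycle whose length must reach the current systolic value before $\ell(e)$ hits $0$, forcing $e$ to be absorbed rather than collapsed. One also needs continuity of the stopping time $\tau(\Gamma) = \inf\{t \geq 0 : \Phi_t(\Gamma) \in V_n\}$ to get the retraction onto $V_n$ specifically; this should follow from the same semicontinuity analysis. I expect the continuity argument to be the only place where one has to work, and the combinatorial/topological accounting for absorption events to be straightforward given the bounded complexity of graphs of fixed rank.
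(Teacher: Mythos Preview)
Your approach is essentially the paper's: shrink the edges outside the systole subgraph $\Sigma(\Gamma)$ while expanding those inside (via the global rescaling) so that the volume stays $1$, and iterate after each moment at which new systoles appear. The paper stages the retraction by the first Betti number of $\Sigma$, setting $U_k=\{\Gamma : b_1(\Sigma(\Gamma))\ge k\}$ and retracting $U_k$ onto $U_{k+1}$, while you index by the number of free edges; up to reparametrization the trajectories in $X_n$ coincide, and the paper treats the continuity you flag as routine rather than introducing an $\eps$-thickening.

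One small correction to your $V_n\setminus V_n'$ analysis: it is \emph{not} true that every free edge $e$ is eventually absorbed into $\Sigma$. Along your flow the ratio of the length of a loop $\alpha$ to the systolic length is
\[
\frac{\ell_t(\alpha)}{\sys(\Gamma_t)}=\frac{\ell_\Sigma(\alpha)+e^{-t}\ell_T(\alpha)}{\sys(\Gamma)},
\]
which stays above $1$ whenever $\ell_\Sigma(\alpha)\ge\sys(\Gamma)$. This can hold for \emph{every} $\alpha$ through $e$ (e.g.\ if $e$ is a separating edge joining two systolic loops), in which case $e$ simply collapses to a point rather than getting frozen. This is harmless---the free edges form a forest, so collapsing them preserves the marked homotopy type---but it means the flow may leave the open simplex of $\Gamma$ before all free edges are absorbed. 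The paper handles this by explicitly allowing the terminal time $\tau=\log(1/s)$ at which all free edges have length zero and one lands directly in $V_n'$.
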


However, their dimension is too large in general.

\begin{thm}\label{thm:high_dimension}
For every $\eps\in (0,1)$, there exists an $n$ such that the dimension of $V_n'$ is at least $(3-\eps)n$, hence larger than the vcd of $\Out(F_n)$.
\end{thm}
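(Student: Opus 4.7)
The plan is to produce, for each $\eps \in (0,1)$, an integer $n$ and a trivalent rank-$n$ graph $\Gamma$ such that an open subset of the corresponding open cell of $X_n$, of dimension at least $(3-\eps)n$, is contained in $V_n'$. A trivalent rank-$n$ graph has $3n-3$ edges, so its open cell has the maximal possible dimension $3n-4$; the target leaves a codimension budget of only about $\eps n$ for linear conditions cutting out the desired subset.

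The key combinatorial input is a cubic graph $\Gamma$ of rank $n$ and girth $g \geq 6/\eps$ in which every edge lies on a girth cycle and whose total number $N$ of girth cycles satisfies $N \leq (\eps/2)\, n$. The lower bound $N \geq (3n-3)/g$ (from double counting edge--girth-cycle incidences) is always in force, which is why $g$ must be taken of order $1/\eps$; producing $\Gamma$ satisfying all three conditions simultaneously is the main obstacle, since most standard high-girth examples (cages, Ramanujan graphs) have far too many girth cycles. One expects $\Gamma$ to arise from an explicit construction in which the girth cycles come close to partitioning the edge set, for example from a cubic $g$-gonal cellular embedding in a surface of controlled genus, possibly after passing to a suitable lift or cover.

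Let $\mathcal{C}$ be the full set of girth cycles of $\Gamma$, and let $L$ denote the affine subspace of the open cell (parametrized by positive edge lengths $(x_e)_{e \in E(\Gamma)}$ with $\sum_e x_e = 1$) cut out by the $N-1$ linear equations forcing all cycles in $\mathcal{C}$ to have equal length. Then $\dim L \geq 3n - 3 - N \geq (3-\eps/2)n - 3$. At the equilateral metric, which lies in $L$, every cycle in $\mathcal{C}$ has length $g/(3n-3)$, strictly less than any non-girth cycle, so the cycles of $\mathcal{C}$ are precisely the systoles there, and by assumption they cover every edge of $\Gamma$; hence the equilateral metric lies in $V_n'$. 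The condition ``every cycle in $\mathcal{C}$ is a systole'' is the conjunction of finitely many weak inequalities between cycle length functionals, all strict at the equilateral metric (equality for cycles in $\mathcal{C}$ is automatic inside $L$, and the inequalities for non-girth cycles are strict), so it defines an open neighborhood $U \subset L$ of the equilateral metric with $U \subset V_n'$. Therefore $\dim V_n' \geq \dim U = \dim L \geq (3-\eps/2)n - 3$, which exceeds $(3-\eps)n$ as soon as $n \geq 6/\eps$. Since the vcd of $\Out(F_n)$ is $2n-3$, this bound also forces $\dim V_n' > \mathrm{vcd}(\Out(F_n))$ for all sufficiently large such $n$.
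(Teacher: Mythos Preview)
Your outline is essentially the paper's argument: take a trivalent graph whose girth cycles are few (of order $n/g$) yet cover every edge, then observe that a neighborhood of the equilateral metric, inside the affine subspace where those cycles have equal length, lies in $V_n'$. Your dimension count $3n-3-N$ matches the paper's $E-F$ exactly, and your openness argument is correct.

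The gap is that you do not actually construct $\Gamma$; you correctly identify this as ``the main obstacle'' and even guess the right shape of the answer (a cubic $g$-gonal cellular embedding in a surface, after passing to a cover), but this is precisely where the content of the proof lies. The paper fills this via its lemma on flag-transitive maps: take the regular $\{p,3\}$ tessellation of $\HH^2$ and quotient by a sufficiently deep finite-index normal subgroup of its symmetry group (using residual finiteness) to obtain a finite trivalent map of type $\{p,3\}$ on a closed surface. The delicate point, which your sketch does not address, is proving that the face boundaries are the \emph{only} cycles of length $p$. For cycles that are null-homotopic in the surface this follows from a Gauss--Bonnet computation on the lifted cycle in $\HH^2$; for cycles that are essential in the surface one uses that the injectivity radius of the quotient tends to infinity along the tower of covers. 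Without this control, a $p$-gonal cubic embedding could easily have many extra girth cycles crossing between faces, and then the bound $N\leq(\eps/2)\,n$ on which your codimension estimate rests would fail.
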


Note that there is also a dynamically-defined notion of filling currents for free groups due to Kapovich and Lustig \cite{filling}. We do not know whether the set of graphs whose systoles fill in that sense forms a spine and if so, what its dimension is.

One may wonder if there is a spine of the minimal dimension $2n-3$ contained in $V_n'$. However, it seems difficult to push the deformation retraction defined in the proof of \thmref{thm:spine} much further. One can continue until there is a systole passing through any pair of edges that are adjacent at a vertex of degree $3$ by folding these edges gradually otherwise, but the proof of \thmref{thm:high_dimension} implies that the  dimension of the resulting set is still too large in general.

\begin{acknowledgements}
I thank Thomas Haettel for making a remark which sparked this project after a talk I gave at UQAM, Karen Vogtmann for pointing out the reference \cite{Baker}, and the referee for useful comments and corrections.
\end{acknowledgements}

\section{Outer space}

We start by proving the negative results regarding Outer space. The proof of both \thmref{thm:not_spine_outer} and \thmref{thm:high_dimension} is based on the same family of graphs that have a large automorphism group and few systoles that cover the whole graph. These graphs were used in \cite{sublinear} to construct hyperbolic surfaces with similar properties.

Given integers $p,q \geq 2$, a \emph{map of type $\{p,q\}$} is a connected graph of constant valence (degree) $q$ embedded in an oriented surface such that each complementary region (whose closure is called a \emph{face}) is a topological disk whose boundary consists of $p$ edges. This can also be phrased in terms of a ribbon structure on the graph. A \emph{flag} is a triple $(v,e,f)$ where $v$ is a vertex, $e$ is an edge, $f$ is a face, and $v \subset e \subset f$. A map is \emph{flag-transitive} if for any two flags there is a homeomorphism of the underlying surface which sends the map to itself and the first flag to the second. For now we consider our maps as combinatorial graphs where each edge has length $1$. The \emph{girth} of a combinatorial graph is the same as its systole, namely, the minimal length of a cycle that is not contractible.

We will require a small variation of a result of Evans \cite[Theorem 11]{Evans} about the existence of flag-transitive maps of large girth. The difference here is that we want to make sure that only the obvious cycles have length equal to the girth.

\begin{lem} \label{lem:evans}
For any $q\geq 3$ and $p\geq 7$, there exists a finite flag-transitive map $M$ of type $\{p,q\}$ and girth $p$ such that the only non-trivial cycles of length $p$ in $M$ are the face boundaries.
\end{lem}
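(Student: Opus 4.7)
The plan is to produce $M$ as a finite regular cover of Evans' original flag-transitive map, chosen large enough to kill extraneous short cycles. Let $\tilde M$ denote the universal cover, which is the regular $\{p,q\}$-tessellation of $\HH^2$ (hyperbolic since $(p-2)(q-2)\ge 5>4$), and let $G$ be its full symmetry group, a cocompact Fuchsian triangle group acting transitively on flags.

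First I would verify that the only embedded $p$-cycles in $\tilde M$ are face boundaries. If an embedded cycle $C$ of length $p$ bounds a disk $D$ containing $k$ faces of $\tilde M$, then face-edge incidence counting (each interior edge borders two faces, each of the $p$ boundary edges borders one) gives $E_D=p(k+1)/2$ edges in $D$, and Euler's formula yields $I=1-k+p(k-1)/2$ interior vertices. Since every vertex of $\tilde M$ has degree $q$, the number of edge-ends at boundary vertices of $D$ is $2E_D-qI=(k-1)(p+q-pq/2)+2p$. Each of the $p$ boundary vertices must contribute at least the two edge-ends coming from $C$ itself, so $(k-1)(p+q-pq/2)\ge 0$. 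For hyperbolic parameters the second factor is strictly negative, forcing $k\le 1$, so $C$ bounds a single face.

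Next I would construct $M$ using residual finiteness. Fix a vertex $v_0\in\tilde M$ and let $S\subset G$ be the finite set consisting of all nonidentity $g\in G$ such that $gv_0$ is the endpoint of some embedded path of length $p$ in $\tilde M$ starting at $v_0$, together with one representative of each of the finitely many conjugacy classes of torsion in $G$. As a finitely generated linear group, $G$ is residually finite, so for each $g\in S$ there is a finite-index normal subgroup $N_g\triangleleft G$ with $g\notin N_g$; the finite intersection $\Gamma':=\bigcap_{g\in S}N_g$ is then finite-index and normal in $G$ and disjoint from $S$. Excluding all torsion classes forces $\Gamma'$ to be torsion-free, so it acts freely on $\HH^2$, and $M:=\tilde M/\Gamma'$ is a flag-transitive map of type $\{p,q\}$ (normality of $\Gamma'$ in $G$ giving the flag-transitivity). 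The girth of $M$ remains $p$ because any shorter cycle would lift to a shorter cycle in $\tilde M$. Finally, any embedded $p$-cycle in $M$ which is not a face boundary lifts to an embedded $p$-path in $\tilde M$ from some vertex $u$ to $\gamma u$ with $\gamma\in\Gamma'\setminus\{1\}$; choosing $h\in G$ with $hv_0=u$ and setting $g=h^{-1}\gamma h$ gives an element of $\Gamma'$ (by normality) sending $v_0$ to the endpoint of an embedded $p$-path from $v_0$, so $g\in S\cap \Gamma'=\emptyset$, a contradiction.

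The main obstacle I anticipate is arranging that $\Gamma'$ is simultaneously normal in $G$, torsion-free, and avoids the finitely many path-closing elements. This is handled by a single residual-finiteness argument applied to the finite obstruction set $S$, using crucially that the triangle group $G$ has only finitely many conjugacy classes of torsion; the combinatorial computation in the first step is the other essential ingredient.
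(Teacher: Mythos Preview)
Your approach is essentially the same as the paper's: start from the $\{p,q\}$ tessellation of $\HH^2$, use residual finiteness of the triangle group to pass to a finite normal quotient with no short nontrivial deck translations, and handle the contractible cycles by a curvature computation in the universal cover. Your combinatorial Euler-characteristic count is equivalent to (and a pleasant substitute for) the paper's Gauss--Bonnet inequality, and your explicit obstruction set $S$ plays the role of the paper's ``injectivity radius $\to\infty$'' argument.

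There is, however, a small but genuine gap in your girth step. The sentence ``the girth of $M$ remains $p$ because any shorter cycle would lift to a shorter cycle in $\tilde M$'' is not correct as stated: a cycle in $M$ that is homotopically nontrivial in the quotient surface lifts to a \emph{path} in $\tilde M$ from $u$ to $\gamma u$ with $\gamma\in\Gamma'\setminus\{1\}$, not to a cycle. Your set $S$ only contains elements $g$ for which $gv_0$ is reachable by an embedded path of length \emph{exactly} $p$, so nothing prevents $\Gamma'$ from containing an element moving $v_0$ a combinatorial distance $<p$, which would produce a cycle in $M$ shorter than $p$. The fix is immediate: enlarge $S$ to include all nonidentity $g\in G$ with $d(v_0,gv_0)\le p$ (still a finite set). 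With that change, both the girth claim and the final $p$-cycle claim go through by the same conjugation argument you already wrote.
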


\begin{proof}
There is an infinite flag-transitive map $M_{p,q}$ of type $\{p,q\}$ embedded in the hyperbolic plane $\HH^2$ coming from the tiling by regular $p$-gons with interior angles $2\pi / q$. The automorphisms of $M_{p,q}$ are realized by a finitely-generated discrete group $G$ of isometries of the hyperbolic plane. By Mal’cev’s theorem \cite{Malcev}, $G$ is residually finite, so there is a sequence of normal subgroups $G_k \triangleleft G$ of finite index such that $\bigcap G_k = \{ \id \}$. This implies that $G_k$ is eventually torsion-free and the closed hyperbolic surfaces $S_k =\HH^2 / G_k$ have injectivity radius going to infinity as $k \to \infty$. If $k$ is large enough, then the projection $M_k$ of $M_{p,q}$ to $S_k$ has type $\{p,q\}$ because the map $\HH^2 \to S_k$ is a covering map. Furthermore, $M_k$ is finite since $M_{p,q}/G$ is a half-edge and $G_k$ has finite index in $G$. Lastly, $M_k$ is flag-transitive via the quotient group $G/G_k$ acting on $S_k$. 

Since the face boundaries in $M_k$ have combinatorial length $p$, the girth of $M_k$ is at most $p$.
Since the injectivity radius of $S_k$ tends to infinity, any cycle in $M_k$ which is not contractible in $S_k$ becomes arbitrarily long (with respect to the hyperbolic metric and therefore also in terms of its number of edges) as $k$ tends to infinity. In particular, a cycle in $M_k$ that is not contractible in $S_k$ has combinatorial length strictly larger than $p$ if $k$ is large enough. It is also true that any cycle in $M_k$ which is contractible in $S_k$ (and hence lifts to the universal cover) has combinatorial length at least $p$ with equality only if it is the boundary of a face. We can prove this as follows. Suppose that $\gamma$ is an embedded cycle of combinatorial length at most $p$ in $M_{p,q}$. Let $A$ be the hyperbolic area of any face in the tiling, let $N$ be the number of faces enclosed by $\gamma$, and for a vertex $v \in \gamma$ let $k_v$ be the geodesic curvature of $\gamma$ at $v$, that is, $\pi$ minus the interior angle. Then the Gauss--Bonnet formula yields
\[
2 \pi = \sum_{v \in \gamma} k_v - N\cdot A \leq p \left(1- \frac{2}{q}\right) \pi - A = 2\pi
\]
so that in fact $N=1$ and $\gamma$ is the boundary of a face.
\end{proof}

To prove our results, we use this construction with $q=3$ and $p \geq 7$ arbitrarily large. Let $M$ be a map satisfying the conclusions of \lemref{lem:evans} with these parameters and let $V$, $E$, and $F$ be its number of vertices, edges, and faces respectively. Then 
\[
3 V = 2 E = p F.
\] 
The rank $n$ of $M$ is such that its Euler characteristic is \[1-n = V - E = -V/2\] so that $n = 1+V/2$. By the lemma, the systoles in $M$ are the faces boun\-da\-ries, so there are $F = 3V/p = \frac{6}{p}(n-1)$ of them. In particular, the number of systoles divided  by the rank $n$ is arbitrarily small if $p$ is large enough.

\begin{proof}[Proof of \thmref{thm:high_dimension}]
Given $\eps \in (0,1)$, choose $p\geq 7$ such that $6/p < \eps$, then let $M$ be a finite map of type $\{p,3\}$ as above whose combinatorial systoles are the face boundaries.

Let $n$ be the rank of $M$, pick an arbitrary homotopy equivalence from the bouquet on $n$ circles to $M$ to get a marking, and make all edges of $M$ of equal length $1/E$ so that its volume is $1$. We can now consider $M$ as an element in the Outer space $X_n$. Since the systoles in $M$ are the face boundaries, they cover the whole graph so that $M\in V_n'$.

We now want to deform $M$ (i.e., vary the lengths on its edges) in such a way that the systoles stay the same curves and thus still cover the whole graph. Since competing curves are longer by a definite amount, near $M$ these curves will remain systoles as long as they stay of equal length. 

If $\gamma_1, \ldots, \gamma_F$ are the systoles, then this requires $F-1$ equations, namely, 
\[
\ell(\gamma_1)= \ell(\gamma_2), \quad  \ell(\gamma_2) = \ell(\gamma_3), \quad \ldots, \quad  \ell(\gamma_{F-1}) = \ell(\gamma_F).
\]
In turn, each $\ell(\gamma_j)$ is equal to the sum of the lengths of the edges traversed by $\gamma_j$, so this gives us $F-1$ linear equations for the edge lengths. The subspace of $\RR^E$ cut out by these equations has codimension at most $F-1$ and then we intersect this with the hyperplane where the sum of the lengths is equal to $1$. The dimension of the intersection $I$ is at least 
\[
E-F = \left(3-\frac{6}{p}\right)(n-1)
\]
and this is larger than $(3 - \eps)n$ provided that $p$ (and hence $n$) is large enough. 

As explained above, there is a neighborhood $U$ of $M$ in $I$ where the face boundaries will remain systoles and hence $U \subset V_n'$. This shows that the dimension of $V_n'$ is at least $(3 - \eps)n$. Since $\eps<1$, this is strictly larger than $2n-3$, the vcd of $\Out(F_n)$. 
\end{proof}

To prove that the well-rounded set $W_n$ does not contain an equivariant spine, we will use the above construction together with the following elementary observation, in which $\Fix(H)$ denotes the set of all points fixed by all the elements in $H$.

\begin{lem} \label{lem:fix}
Let $G$ be a group acting on a topological space $E$ and let $S \subseteq E$ be a $G$-equivariant spine for $E$. If $H$ is a subgroup of $G$, then $S \cap\Fix(H)$ is a $G$-equivariant spine for $\Fix(H)$. In particular, if $\Fix(H)\neq \varnothing$ then $S \cap\Fix(H)\neq \varnothing$.
\end{lem}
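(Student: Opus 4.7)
The proof will be an immediate unpacking of the definition of an equivariant spine, together with one key observation about how the retraction interacts with fixed-point sets.

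The plan is to start from the definition: since $S$ is a $G$-equivariant spine for $E$, there exists a homotopy $r \from E \times [0,1] \to E$ such that $r_0 = \id_E$, $r_1(E) \subseteq S$, $r_t|_S = \id_S$ for every $t \in [0,1]$, and each $r_t$ commutes with the $G$-action on $E$. The key step is to observe that this homotopy preserves $\Fix(H)$: for any $x \in \Fix(H)$ and any $h \in H$, one has
\[
h \cdot r_t(x) = r_t(h \cdot x) = r_t(x),
\]
so $r_t(x) \in \Fix(H)$ for every $t$. Thus $r$ restricts to a homotopy $\Fix(H) \times [0,1] \to \Fix(H)$.

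Next I would check that this restricted homotopy is a deformation retraction onto $S \cap \Fix(H)$. The inclusion $r_1(\Fix(H)) \subseteq S \cap \Fix(H)$ follows from $r_1(E) \subseteq S$ and the observation above. Conversely, if $x \in S \cap \Fix(H)$, then $r_1(x) = x$ since $r_t$ is the identity on $S$, so $x \in r_1(\Fix(H))$; moreover $r_t|_{S \cap \Fix(H)} = \id$ for all $t$. Equivariance of the restricted homotopy with respect to the subgroup of $G$ preserving $\Fix(H)$ (which in particular contains $N_G(H)$, and this is what is meant by the phrase ``$G$-equivariant spine for $\Fix(H)$'' in the statement) is inherited from the $G$-equivariance of $r$.

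Finally, the ``in particular'' clause is immediate: a deformation retract of a nonempty space is nonempty, so $\Fix(H) \neq \varnothing$ forces $S \cap \Fix(H) \neq \varnothing$. There is no real obstacle here; the lemma is essentially a formal consequence of the definitions, and the only substantive point is the equivariance-implies-preservation-of-fixed-points computation displayed above, which is what makes the lemma useful for the subsequent obstruction arguments.
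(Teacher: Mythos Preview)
Your proof is correct and follows essentially the same approach as the paper's: both unpack the definition of an equivariant deformation retraction, use the identical computation $h\cdot r_t(x)=r_t(h\cdot x)=r_t(x)$ to show the homotopy preserves $\Fix(H)$, and then verify that the restriction retracts onto $S\cap\Fix(H)$. Your parenthetical remark that only the setwise stabilizer of $\Fix(H)$ (containing $N_G(H)$) genuinely acts on $\Fix(H)$ is a nice clarification that the paper glosses over, but otherwise the arguments are the same.
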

\begin{proof}
Let $(x,t) \mapsto f_t(x)$ be a continuous map from $E \times [0,1]$ to $E$ such that $f_0$ is the identity on $E$, $f_1(E) = S$, $f_t(x)=x$ for every $x \in S$ and every $t \in [0,1]$, and $f_t(g(x))=g(f_t(x))$ for every $g\in G$, every $x \in X$, and every $t \in [0,1]$. Then for every $t \in [0,1]$, every $h \in H$, and every $x \in \Fix(H)$, we have \[h(f_t(x)) = f_t(h(x))=f_t(x)\] so that $f_t(x) \in \Fix(H)$. This shows that $f_t$ restricts to a map from $\Fix(H)$ to $\Fix(H)$ for all $t \in [0,1]$. This restriction is still $G$-equivariant and equal to the identity on $S \cap \Fix(H)$. Finally, we have \[f_1(\Fix(H)) \subseteq f_1(E) \cap \Fix(H) = S \cap \Fix(H) = f_1(S \cap \Fix(H)) \subseteq f_1(\Fix(H)) \] so that $f_1(\Fix(H)) = S \cap \Fix(H)$. In particular, if $\Fix(H)\neq \varnothing$ then \[S \cap \Fix(H)=f_1(\Fix(H)) \neq \varnothing. \qedhere\]
\end{proof}

We can now prove that $W_n$ does not contain any spine.

\begin{proof}[Proof of \thmref{thm:not_spine_outer}]
Take any $p\geq 7$ and let $M$ be a finite flag-transitive map of type $\{3, p \}$ such that its systoles are the face boundaries. Recall that there are $\frac{6}{p}(n-1)< (n-1)$ systoles in $M$ where $n$ is the rank. In particular, the systoles in $M$ do not generate a finite-index subgroup in $H_1(M,\ZZ) \cong \ZZ^n$. Considering $M$ as a point in $X_n$ after taking a marking and rescaling the metric, this means that $M \notin W_n$.

On the other hand, the stabilizer $H$ of $M$ in $\Out(F_n)$ is isomorphic to the automorphism group of $M$ via the homotopy equivalences between the bouquet on $n$ circles and $M$. Since the quotient $M/\Aut(M)$ is a half-edge whose deformation space is a point, $M$ is the unique fixed point of the group $H$. If there is an equivariant spine $S$ contained in $W_n$, then we have $M \in S \subseteq W_n$ by \lemref{lem:fix}, contradicting $M \notin W_n$. We conclude that $W_n$ does not contain a spine.
\end{proof}

\begin{remark}
By taking $p$ sufficiently large in the above proof, we see that for any $\rho>0$, there exists some $n$ such that the set of graphs in $X_n$ that have at least $\rho\,  n$ systoles does not contain an equivariant spine.
\end{remark}

We end this section by proving the positive result that the sets $V_n$ and $V_n'$ are spines for $X_n$. The proof is the same as for Ash's well-rounded retract $\mathfrak{W}_n$. Recall that $V_n$ is the set of metric graphs (of volume $1$ and rank $n$) whose systoles are such that every component of the complement of their union is contractible (hence a finite tree without its leaves) and $V_n'$ is the set of graphs whose systoles cover the whole graph.  

\begin{proof}[Proof of \thmref{thm:spine}]
The deformation retract is performed in stages. Let $U_k$ be the set of metric graphs such that the union of the systoles is a (possibly disconnected) graph with first Betti number at least $k$. Note that $U_1 = X_n$, $U_n = V_n$, and $U_k \supset U_{k+1}$ for every $k$. It thus suffices to construct an equivariant deformation retraction of $U_k$ onto $U_{k+1}$ for every $k$ and then a deformation retraction of $V_n$ onto $V_n'$.

Let $\Gamma \in U_k\setminus U_{k+1}$ for some $k\geq 1$, let $S_\Gamma$ be the set of edges that belong to some systole and let $T_\Gamma$ be the set of remaining edges. Let $s=s(\Gamma)<1$ be the total length of $S_\Gamma$ so that the total length of $T_\Gamma$ is $1-s>0$. For $0\leq t \leq \log(1/s)$, we define $\Gamma_t$ by rescaling the edges in $S_\Gamma$ by a factor of $e^t$ and those in $T_\Gamma$ by a factor of $\frac{1-e^{t}s}{1-s} \geq 0$ so that the volume remains equal to $1$. For $t$ sufficiently small, the set of systoles in $\Gamma_t$ stays constant because the next shortest closed geodesics in $\Gamma$ are longer by a definite proportion. In particular, $\Gamma_t \in U_k\setminus U_{k+1}$ for all small enough $t\geq 0$. Let $\tau =\tau(\Gamma)$ be the supremum of times $t\in [0,\log(1/s)]$ such that $\Gamma_t \in U_k\setminus U_{k+1}$. Note that $\Gamma_\tau \in U_k$ since $U_k$ is closed and $\Gamma_t$ varies continuously. On the other hand, the union of the systoles in $\Gamma_\tau$ cannot be equal to a subgraph of rank exactly $k$ otherwise we could continue the deformation for $t > \tau$, so we have $\Gamma_\tau \in U_{k+1}$. This is unless $\tau =  \log(1/s)$, in which case the systoles in $\Gamma_\tau$ cover the whole graph and thus $\Gamma_\tau \in V_n' \subseteq V_n \subseteq U_{k+1}$ in that case too.

 The deformation retraction $U_k \times [0,1] \to U_{k}$ onto $U_{k+1}$ is defined by sending $(\Gamma, t)$ to $\Gamma_{t\cdot \tau(\Gamma)}$ if $\Gamma \in U_k\setminus U_{k+1}$ and to $\Gamma$ if $\Gamma \in U_{k+1}$. This map is clearly continuous, $\mathrm{Out}(F_n)$-equivariant, equal to the identity on $U_k$ at $t=0$ and on $U_{k+1}$ for all $t$, and a retract onto $U_{k+1}$ at $t=1$.
 
 The final deformation retraction of $V_n$ onto $V_n'$ can be defined similarly, by shrinking all the edges that do not belong to any systole (and expanding the rest to keep the volume constant) until either the complementary components have been shrunk to points or some new systole passing through a complementary component appears. Once again, the deformation retraction is performed in stages, ordered according to the number of edges that do not belong to any systole (recall that the metric graphs in $X_n$ are not allowed to have vertices of degree $1$ or $2$, so there are at most $3n-3$ edges).
\end{proof}

\section{Teichm\"uller space}

It remains to prove \thmref{thm:not_spine_teich} stating that the set $\calW_g$ of surfaces in $\calT_g$ whose systoles generate a finite-index subgroup in homology does not contain any equivariant spine. We simply explain how this follows from results in \cite{sublinear} and \cite{dim}.

\begin{proof}[Proof of \thmref{thm:not_spine_teich}]
Theorem 1.1 in \cite{sublinear} states that for every $\eps >0$, there exists some $g \geq 2$ and a closed hyperbolic surface $X$ such that the systoles in $X$ fill (so that $X \in \calV_g$) but there are fewer than $\eps g$ of them. By \cite[Proposition 5.1]{sublinear}, the surface constructed is such that $\Isom(X)$ acts transitively on a tiling of $X$ by copies of a quadrilateral $Q$ with three right angles and one angle of $\pi/q$ for some large integer $q$. Taking $H = \Isom(X)$ as a subgroup in the extended mapping class group $\Mod_g^{\pm}$, we see that the locus of fixed points $\Fix(H)$ in $\calT_g$ is $1$-dimensional because it is isometric to the Teichm\"uller space of the quotient orbifold $Q=X / \Isom(X)$. Trigonometric identities between the side lengths of a quadrilateral with three right angles \cite[p.454]{Buser} imply that this space is $1$-dimensional.

Now, varying the shape of $Q$ has the effect of changing the right-angled regular $2q$-gon $P$ used to construct $X$ into a semi-regular right-angled polygon with side lengths alternating between two values $t$ and $s(t)$ as in \cite[Section 2]{dim}. If we denote the deformed surface by $X_t$, then the arguments in \cite[Section 2]{dim} and \cite[Proposition 4.1]{sublinear} can be easily modified to show that the systoles in $X_t$ are either the red curves or the blue curves (or both) in the language of these papers. That is to say, the systoles in $X_t$ are a subset of those in $X$. In particular, there are fewer than $\eps g$ systoles in $X_t$ for every $X_t \in \Fix(H)$. Since any finite-index subgroup of $H_1(X,\ZZ)$ has rank $2g$, we obtain that $\Fix(H)$ is disjoint from $\calW_g$ as long as $\eps < 2$. By \lemref{lem:fix}, it follows that $\calW_g$ does not contain any $\Mod_g^{\pm}$-equivariant spine, for otherwise $\calW_g \cap \Fix(H)$ would be non-empty.

We then extend this statement to the mapping class group $\Mod_g$. Let $H^+ \leq H$ be the index-$2$ subgroup of orientation-preserving isometries. Then $Q^+ = X/H^+$ is an oriented orbifold without boundary that covers $Q$ with degree two, hence is equal to the double of $Q$ across its boundary, i.e., a sphere with $4$ cone points. The set $\Fix(H^+)$ is isomorphic to the Teichm\"uller space of $Q^+$, which is isometric to the hyperbolic plane, so $\Fix(H^+)$ is a Teichm\"uller disk $D$ containing the geodesic $L = \Fix(H)$.

Suppose that $S \subseteq \calW_g$ is a $\Mod_g$-equivariant spine for $\calT_g$. Since $\calW_g$ is disjoint from $L$, so is $S$. By \lemref{lem:fix}, $S\cap D$ is a deformation retract of $D$, so it is connected, hence contained in one of the two half planes bounded by $L$. On the other hand, $S \cap D$ is invariant under the action of the stabilizer $K$ of $D$ in $\Mod_g$. This stabilizer $K$ contains a copy of the pure mapping class group $\PMod(Q^+)$ since all homeomorphisms of $Q^+$ fixing the cone points lift to $X$. In turn, $\PMod(Q^+)$ acts on $\HH^2 \cong \calT(Q^+) \cong D$ as the principal congruence subgroup of level two $\Gamma(2)$, a finite-index subgroup in $\SL_2(\ZZ)$. In particular, $K$ is a lattice in $\Isom^+(D)$, hence its limit set is all of $\partial D$. This contradicts the previous observation that $S \cap D$ is $K$-invariant and contained in a half-plane.
\end{proof}

\begin{remark}
Similarly as for Outer space, the above argument shows that for any $\rho > 0$, the set of hyperbolic surfaces that have at least $\rho\, g$ systoles does not contain an equivariant spine for infinitely many $g$.
\end{remark}

\bibliographystyle{amsalpha}
\bibliography{biblio}

\end{document}